\newlength{\defbaselineskip}
\numberwithin{equation}{section} 
\newtheorem{theorem}{Theorem}[section]
\newtheorem{corollary}[theorem]{Corollary}
\newtheorem{lemma}[theorem]{Lemma}
\newtheorem{proposition}[theorem]{Proposition}
\newtheorem{remark}{Remark}[section]
\newcommand{\Z}{{\mathbb Z}}
\newcommand{\thechapterwords}
{ \ifcase \thechapter\or 1\or 2\or 3\or 4\or 5\or
	6\or 7\or 8\or 9\or 10\or 11\fi}
\def\thickhrulefill{\leavevmode \leaders \hrule height 2ex \hfill \kern \z@}
\def\@makechapterhead#1{%
	\vspace*{15\p@}%
	{\parindent \z@ \centering \reset@font
		\thickhrulefill\quad
		\scshape  {\chapnumfont \@chapapp{}}{\chapnumfont \thechapterwords}
		\quad \thickhrulefill
		\par\nobreak
		\vspace*{15\p@}%
		\interlinepenalty\@M
		\hrule
		\vspace*{15\p@}%
		\huge {\bfseries  #1}\par\nobreak
		\par
		\vspace*{15\p@}%
		\hrule
		\vskip 15\p@
	}}
	\def\@makeschapterhead#1{%
		\vspace*{15\p@}%
		{\parindent \z@ \centering \reset@font
			\thickhrulefill
			\par\nobreak
			\vspace*{15\p@}%
			\interlinepenalty\@M
			\hrule
			\vspace*{15\p@}%
			\Huge \bfseries #1\par\nobreak
			\par
			\vspace*{15\p@}%
			\hrule
			\vskip 30\p@
		}}
		\DeclareFixedFont{\chapnumfont}{T1}{phv}{b}{n}{20pt}
		\DeclareFixedFont{\chapchapfont}{T1}{phv}{b}{n}{16pt}
		\DeclareFixedFont{\chaptitfont}{T1}{phv}{b}{n}{24.88pt}
		\def\@makechapterhead#1{%
			\vspace*{15\p@}%
			{\parindent \z@ \centering \reset@font
				\thickhrulefill\quad
				\scshape {\chaptitfont\color[rgb]{0.00,0.50,1.00}\@chapapp{}}
				{\chapnumfont \thechapterwords}
				\quad \thickhrulefill
				\par\nobreak
				\vspace*{15\p@}%
				\interlinepenalty\@M
				\hrule
				\vspace*{15\p@}%
				{\Large\bfseries #1}\par\nobreak
				\par
				\vspace*{15\p@}%
				\hrule
				\vskip 30\p@
			}}%
\begin{document}
				\title{Dynamic of Pair of some Distributions: Bi-lagrangian structure and its prolongations on the (co)tangent bundles, and Cherry flow}
				\date{ }
				\author{ \large{Bertuel  Tangue NDAWA }\\
\normalsize{bertuelt@yahoo.fr}
					\vspace{0.5cm}\\\today}
				\maketitle
				
				\selectlanguage{english}
	In the memory of Dad NDAWA Joseph, Dad TCHOUATEUN Pierre, and Dad PAHANE Isaac.
	\section*{Abstract}
				We consider a bi-Lagrangian manifold $(M,\omega,\mathcal{F}_{1},\mathcal{F}_{2})$. That is,  $\omega$ is a 2-form, closed and non-degenerate (called symplectic form) on  $M$, and  $(\mathcal{F}_{1},\mathcal{F}_{2})$ is a pair of transversal Lagrangian foliations on the symplectic manifold $(M,\omega)$. In this case, $(\omega, \mathcal{F}_{1},\mathcal{F}_{2})$ is a bi-Lagrangian structure on $M$.
				
				In this paper, we prolong a bi-Lagrangian structure on $M$  on its tangent bundle $TM$ and  its cotangent bundle $T^{*}M$ in different ways. As a consequence some dynamics on the bi-Lagrangian structure of $M$ can be prolonged as  dynamics on the bi-Lagrangian structure of $TM$ and  $T^{*}M$. 
				 Observe that   a pair of transversal vector fields without singularity on the 2-torus $\mathbb{T}^2=\mathbb{S}^1\times\mathbb{S}^1$ endowed with a symplectic form defines a bi-Lagrangian structure on $\mathbb{T}^2$. This sparked our curiosity. By studying the dynamic of pairs of vector fields on $\mathbb{T}^2$,  we found that some circle maps with a flat piece (called Cherry maps) can be generated by a pair of vector fields.  Moreover, the  push forward action of the set of diffeomorphisms  $\mathbb{T}^2$  on the set of its vector fields induces
				a conjugation action on the set of generated Cherry maps.  
				
				\textbf{Keywords}: Symplectic, Symplectomorphism, Bi-Lagrangian, Para-K\"{a}hler, Hess connection, Cherry vector field, Circle map, Flat piece.
				
				\textbf{MSC2010}: 53D05, 53D12.
				
				\textbf{Acknowledgment}:




\section{Introduction}

The bi-Lagrangian manifolds has been intensively explored in the past years, see \cite{2, 11, 3, GB1, FR1, FR2, 7, FE}. Among the many reasons to study them, they are the areas of geometric quantization (see \cite{7}) and of Koszul-Vinberg Cohomology (see \cite{GB1}). The data of a  bi-Lagrangian  manifold $(M,\omega,\mathcal{F}_{1},\mathcal{F}_{2})$  induces in a one-to-one way a para-K\"{a}hler structure $(G,F)$  on $M$ (that is, $G$ is a pseudo-Riemannian metric  on $M$ and $F$ is a para-complex structure  on $M$ which permutes with $G$ in the following sens: $G(F(\cdot),F(\cdot))=-G(\cdot,\cdot)$). The three tensors $\omega$, $G$ and $F$  are linked by the relation:  $\omega(\cdot,\cdot)=G(F(\cdot),\cdot)$, see \cite{FR1, FR2, 1, FE}. Therefore, a bi-Lagrangian manifold put together symplectic, semi-Riemannian, and almost product structures (note that, if $(M,\omega,\mathcal{F}_{1},\mathcal{F}_{2})$ is a bi-Lagrangian manifold, then $(\mathcal{F}_{1},\mathcal{F}_{2})$ is a bi-Lagrangian structure on the symplectic manifold $(M,\omega)$). In  \cite{7}, the author proves that a bi-Lagrangian  manifold $(M,\omega,\mathcal{F}_{1},\mathcal{F}_{2})$ have a unique torsion free (torsionless) connection $\nabla$ called Hess (or bi-Lagrangian) connection  which parallelizes $\omega$ ($\nabla$ is a symplectic connection) and preserves both foliations. The explicit formula of a Hess connection was given in \cite{2, 11, 3}.  In the case of 2-torus $\mathbb{T}^2$, a Lagrangian foliation can be defined by a vector field without singularity. Thus the study of Lagrangian foliation on the 2-torus is a complement to the study of the dynamic of some vector fields on $\mathbb{T}^2$ with a finite number of singularities, see \cite{TC, MSM, PM, LP1}. 

This paper extends on the one hand, the results in \cite{TNB2}, and on the other hand, some  results in \cite{MSM, LP1}. 

Before we can explain more precisely and prove our results, it is necessary to present some definitions, fix some notations   and formulate some known results we need.

\subsection{Basics definitions, properties and notations}\label{sub1}
We assume that all the objects are smooth throughout this paper unless otherwise stated.

\subsubsection{Foliation}
Let $M$ be an $m$-manifold. We define a $k$-dimensional foliation $\mathcal{F}$ on $M$ to be a decomposition of $M$ into a union of disjoint, non-empty, connected, immersed $k$-dimensional submanifolds  $\{S_x\}_{x\in M}$, called the leaves of the foliation, with the following property (called completely integrable property): every point $y$ in $M$ has a coordinate chart   $(U, y^1,\dots, y^m)$ such that for each leaf $S_x$ the components of $U\cap S_x$  are described by the equations $y^{k+1}\mbox{=constant},\dots, y^m=\mbox{constant}$, see \cite[p. 370]{law}, \cite[p. 501]{lee}.

The expressions $T\mathcal{F}\subset TM$ and $\Gamma\left(T\mathcal{F}\right)=\Gamma\left(\mathcal{F}\right)\subset \Gamma\left(TM\right)= \mathfrak{X}(M)$ (the set of vector fields on $M$) denote the tangent bundle of $\mathcal{F}$ and the set of sections of $T\mathcal{F}$  respectively. For each point $y\in M$ the vector subspace $T_yS_y\subset T_yM$ is called the bundle tangent of $\mathcal{F}$ over $y$ and is denoted by $\mathcal{F}_y$ or $T_y\mathcal{F}$. 

The Lie bracket of two vector fields $X$, $Y$ is that defined by
 $[X,Y]:=X\circ Y-Y\circ X$.

Note that the completely integrable property of a foliation $\mathcal{F}$ means that $\Gamma\left(\mathcal{F}\right)$ is stable under the Lie bracket; that is, if $X,Y\in \Gamma\left(\mathcal{F}\right)$ then $[X,Y]\in\Gamma\left(\mathcal{F}\right)$. This is the Frobenius Theorem, see \cite[p. 496]{lee}.

Let $\psi : M\longrightarrow N$ be a diffeomorphism. The  push forward  $\psi_*\mathcal{F}=\{\psi (S_x)\}_{x\in M}$  of $\mathcal{F}$ by $\psi$ is a foliation on $N$, and
\begin{equation}
\Gamma\left(\psi_*\mathcal{F}\right):=\{\psi_*X,\; X \in\Gamma\left(\mathcal{F}\right)\}=\psi_*\Gamma\left(\mathcal{F}\right).\label{Bieq2}
\end{equation}

We denote by $Diff(M)$ the set of diffeomorphisms from $M$ to itself. 

Let $S\subset M$ be a submanifold. The conormal  space at a point $x\in S$ is defined by
$$N^*_xS=\{\xi_x\in T_xM,\;{\xi_x}_{|T_xS}=0\},$$
and the conormal bundle of $S$ \textsl{}is 
$$N^*S=\{(\xi_x,x)\in T^*M,\;\xi_x\in N^*_xS\}.$$
The conormal bundle of a foliation $\mathcal{F}=\{S_x\}_{x\in M}$ is 
$$N^*\mathcal{F}=\{N^*S_x\}_{x\in M}.$$

If the manifold $M$ is endowed with a symplectic form $\omega$ (as a consequence, $m=2n$),  a submanifold  $S\subset M$ is Lagrangian if for every $X\in\Gamma\left(TS\right)$, $\omega(X,Y)=0$ if and only if $Y\in\Gamma\left( TS\right).$
That is, the orthogonal section
$$\Gamma\left(TS\right)^{\perp}=\left\{X\in \mathfrak{X}(M):\; \omega(X,Y)=0, \;Y\in \Gamma\left(TS\right)\right\}$$
of  $\Gamma\left(TS\right)$ is equal to $\Gamma\left(TS\right)$.

A foliation $\mathcal{F}$ is Lagrangian if its leaves are lagrangian. That is, 
$\Gamma\left(\mathcal{F}\right)^{\perp}=\Gamma\left(\mathcal{F}\right).$

 A bi-Lagrangian structure on  $M$ consists of a pair $(\mathcal{F}_{1},\mathcal{F}_{2})$ of transversal Lagrangian foliations together with a symplectic form $\omega$. As a consequence, $TM=T\mathcal{F}_{1}\oplus T\mathcal{F}_{2}$.

Let $(\mathcal{F}_{1},\mathcal{F}_{2})$ be a bi-Lagrangian structure on a symplectic $2n$-manifold
$(M,\omega)$. Every point in $M$ has an open neighborhood $U$ which is the domain of a chart whose local coordinates $(p^1,\dots,p^{n},q^1,\dots,q^{n})$ are such that
\begin{equation*}
\begin{cases}
\Gamma(\mathcal{F}_1)_{\mid U}=\left<\frac{\partial}{\partial
	p^1},\dots,\frac{\partial}{\partial p^n}\right>,\vspace{0.25cm}\\
\Gamma(\mathcal{F}_2)_{\mid U}=\left<\frac{\partial}{\partial
	q^{1}},\dots,\frac{\partial}{\partial q^{n}}\right>.\end{cases}
\end{equation*}
Such a chart, and such local coordinates, are said to be adapted to the bi-Lagrangian structure $(\mathcal{F}_{1},\mathcal{F}_{2})$. Moreover, if
 $$\omega=\sum_{i=1}^{n}dq^i\wedge dp^i,$$
then such a chart, and such local coordinates, are said to be adapted to the bi-Lagrangian structure $(\omega,\mathcal{F}_{1},\mathcal{F}_{2})$.


Let $\nabla $ be a linear connection. The torsion tensor $T_{\nabla}$ (or simply $T$ if there is no ambiguity) and curvature tensor $R_{\nabla}$  (or simply $R$) are given respectively by
$$ T_{\nabla}(X,Y)=\nabla_XY-\nabla_YX-[X,Y], \; X,Y\in\mathfrak{X}(M)$$
and
$$ R_{\nabla}(X,Y)Z=\nabla_X{\nabla_YZ}-\nabla_Y{\nabla_X^Z}-\nabla_{[X,Y]} , \; X,Y,Z\in\mathfrak{X}(M).$$

We say that a bi-Lagrangian structure (manifold) is affine when its Hess connection $\nabla$ is a curvature-free connection; that is, $\nabla$ is flat.
The set of affine bi-Lagrangian structure  is characterized  in Theorem~\ref{c10}.

We say that a connection $\nabla$
\begin{enumerate}
	\item[-] parallelizes $\omega$ if  $\nabla\omega=0$; this means,
	\begin{equation*} \label{Bieq3}
	\omega(\nabla_{X}{ Y},Z)+\omega(Y,\nabla_{ X}{Z})=X\omega(Y,Z),\; X,Y, Z\in\mathfrak{X}(M);
	\end{equation*}
	\item[-] preserves $\mathcal{F}$ if  $\nabla {\Gamma\left(\mathcal{F}\right)}\subseteq \Gamma\left(\mathcal{F}\right)$; more precisely,
	\begin{equation*}  \label{Bieq4}
	\nabla_XY\in\Gamma\left(\mathcal{F}\right),\; (X,Y)\in \mathfrak{X}(N)\times\Gamma\left(\mathcal{F}\right).
	\end{equation*}	
\end{enumerate}


Einstein summation convention: an index repeated as sub and superscript in a product represents summation over the range of the index. For example,
$$\lambda^j\xi_j=\sum_{j=1}^n \lambda^j\xi_j.$$
In the same way,
$$X^j\frac{\partial}{\partial y^j}=\sum_{j=1}^nX^j\frac{\partial}{\partial y^j}.$$

Let $k\in\mathbb{N}$.  Instead of $\{1,2,\dots,k\}$ we will simply write $[k]$.

\subsubsection{Prolongations of some objects to tangent bundle}
The prolongations  of tensor fields and linear connexion have been already defined by several authors, see \cite{SS, YK, SI}.
We are going to present the vertical and complete lifts of a tensor field, the vertical lift of a linear connection, and introduce the vertical lift of a foliation. If $T$ is a tensor field on a manifold $M$, $T^v$ and $T^c$ mean the vertical and the complete lifts of $T$ on $TM$ respectively. 

Let $\pi: TM\longrightarrow M$ be the natural projection,  $f\in C^{\infty}(M)$, $X\in\mathfrak{X}(M)$ and $\alpha\in\Omega^1(M)$ (the set of 1-forms on $M$).
\begin{enumerate}
\item[-] $f^v=f\circ \pi$ and $f^c=df$;
\item[-] $X^v(f^c)=(Xf)^v$ and $X^c(f^c)=(Xf)^c$;
\item[-] $\alpha^v(Y^c)=(\alpha(Y))^v$ and $\alpha^c(Y^c)=(\alpha(Y))^c$, $Y\in\mathfrak{X}(M)$. 
\end{enumerate}
From these definitions, it follows that 
\begin{equation}\label{eqlift1}
\begin{array}{ll}
(fg)^v=f^vg^v,& (fg)^c=f^cg^v+f^vg^c, \\
(fX)^v=f^vX^v,& (fX)^c=f^cX^v+f^vX^c,\\
(f\alpha)^v=f^v\alpha^v,& (f\alpha)^c=f^cg^v+f^v\alpha^c
\end{array}
\end{equation}
where $f,g\in C^{\infty}(M)$, $X\in\mathfrak{X}(M)$ and $\alpha\in\Omega^1(M)$.

By (\ref{eqlift1}), the vertical and complete lifts of a tensor field on $M$ are defined inductively by using the following formulas: 
\begin{equation}\label{eqlift2}
\begin{array}{ll}
(S+ T)^v=S^v+ T^v,&
 (S+ T)^c=S^c+ T^c,\\
(S\otimes T)^v=S^v\otimes T^v,& (S\otimes T)^c
\end{array}
\end{equation}
 for all tensor fields $S$ and $T$ on $M$. As a consequence, if $T$ is a $(r,0)$ tensor, then 
 \begin{equation}\label{eqlift3}
 T^c(X_1^c,\dots, X_r^c)=(T(X_1,\dots, X_r))^c\; \mbox{for all } X_1,\dots, X_r\in\mathfrak{X}(M).
 \end{equation}

In this paper, we just need the complete lifts of a linear connection and a foliation. The complete lift of a linear connection $\nabla$ is 
\begin{equation}\label{lift of connection}
\nabla^c_{X^c}Y^c=(\nabla_{ X}Y)^c,
\end{equation}
and if $T$ is a tensor, then
\begin{equation}\label{lct}
\nabla^cT=(\nabla T)^c.
\end{equation}
If $\mathcal{F}$ is a foliation, then $\mathcal{F}^c$ is the foliation whose $\Gamma\left(\mathcal{F}^c\right)$ the set of sections is  
\begin{equation}\label{lift of foliation}
\Gamma\left(\mathcal{F}^c\right):=\{X^c,\; X \in\Gamma\left(\mathcal{F}\right)\}=\left(\Gamma\left(\mathcal{F}\right)\right)^c.
\end{equation}


\subsubsection{Cherry vector field and map}
Let $X$ be a vector field on the 2-torus $\mathbb{T}^2$ and let $x$ be a point of $\mathbb{T}^2$.

 We denote by: 
\begin{enumerate}
	\item[-]$Sin(X)$ the set of singularities of $X$;
	\item[-] $t\mapsto \Phi^t_X$ the flow of $X$ on a neighborhood of  $x$. That is, there is a neighborhood $U$ of $x$ such that \begin{equation*}
	\left.\dfrac{d}{dt}\right | _{t=0}(\Phi^t_X(y))=X_y\; \mbox{ for every } y\in U;
	\end{equation*} 
		\item[-] $\gamma^-(x)=\{\Phi^t_X, \; t\leq 0\}$ and 
		$\gamma^+(x)=\{\Phi^t_X, \; t\geq 0\}$ the negative and positive semi-trajectories of $x$ respectively;
	\item[-]$\gamma(x)=\gamma^-(x)\cup\gamma^+(x)$ the one-dimensional trajectory of $x$. 
\end{enumerate}
Note that if $\gamma(x)$ and the circle $\mathbb{S}^1$ are homeomorphic, then $\gamma(x)$ is a closed trajectory or is a periodic trajectory.
the trajectory $\gamma(x)$ is said to be non-closed if it is neither a fixed point nor periodic trajectory. 

A vector field without closed trajectory is said to be Cherry on $\mathbb{T}^2$ if it has two singularities, a sink and a saddle, both hyperbolic. We denote by $\mathfrak{X}_c(\mathbb{T}^2)$ the set of Cherry vector field on $\mathbb{T}^2$.

The first example of such a vector field was given by Cherry, see \cite{TC}.

%

In this paper, We link pairs of vector fields with a class $\mathscr{L} $ of circle maps with a flat piece noted.

We consider  $\mathbb{S}^1$ as the interval $[0,1]$  where we identify
0 with 1.
We fix $\ell_1, \ell_2\geq0$, an interval $U=(a,b)\subset [0,1]$ and,   a low dimensional  order preserving map $f$  belongs to $\mathscr{L} $ with critical exponents $(\ell_1, \ell_2)$ and flat piece $U$ if the following holds:
\begin{enumerate} 
	\item The image of $U$ is one point.
	\item The restriction of $f$ to $[0,1]\setminus\overline{U} $ is a diffeomorphism onto its image.
	\item 
	On a left-sided neighborhood of $a$, $f$ equals
	\begin{equation*}
	h_l((x-a)^{\ell_1})
	\end{equation*}
	
	where $h_l$ is a diffeomorphism  on a two-sided neighbourhood of $a$.
	Analogously, on some right-sided neighborhood of $b$, $f$ can be represented as
	\begin{equation*}
	h_r((x-b)^{\ell_2}).
	\end{equation*}
\end{enumerate}
The interval $U$ is called the flat piece (or interval) of $f$.  
A map in $\mathscr{L}$ is called a Cherry map.

We say that a map $f\in \mathscr{L}$ with $(\ell_1,\ell_2)$ as its critical exponents is symmetric if $\ell_1=\ell_2$. 

Let $F$ be a lift of $f\in \mathscr{L}$ 
on the real line. The rotation number $\rho (f)$ of $f$ is  defined (independently of $F$)  by 
\begin{equation*}
\rho (f):=\lim_{n\rightarrow\infty }\dfrac{F^n(0)}{n}(mod\:1).
\end{equation*}
Note that a map $f$ in $\mathscr{L}$ has a periodic point if and only if $\rho (f)$ is a rational number. The class of maps in $\mathscr{L}$ with irrational rotation number is more interesting, see \cite{MSM, GJST, GJ, LP1, TNB1, TNB2}.




\subsection{Technical tools}
In this part, we present results that we will need in the following of this work.
\subsubsection{Symplectic manifolds}
Symplectic manifolds have been studied since 1780. Among many results on the theory, the cotangent bundle of a manifold is endowed with a so-called tautological 2-form. This part is devoted to the precise formulation of this result. For more familiarization with the concepts covered here, the reader is referred to \cite{dasilva, paul}.
%

Let $M$ be a $m$-manifold 
and let $^*\hspace{-0.1cm}\pi: TM\longrightarrow M$ be the natural projection.
The tautological 1-form or Liouville 1-form $\theta$ is defined by
$$ \theta_{(x,\alpha_x)}(v)=\alpha_x\left(T_{(x,\alpha_x)}{^*\hspace{-0.1cm}\pi}(v)\right),\;  (x,\alpha_x)\in T^{*}M,\,v\in T_{(x,\alpha_x)}T^*M, $$
and its exterior differential $d\theta$  is called  the canonical symplectic form or Liouville 2-form  on  the cotangent
bundle $T^{*}M$.	

Note that for any coordinate chart $(U, x^1,\dots,x^m)$ on $M$, with associated cotangent coordinate chart $(T^*U, x^1,\dots,x^m, \xi_1,\dots,\xi_m)$ we have
$$\theta=\sum_{1}^{m} \xi_idx_i,$$
and
$$d\theta=\sum_{1}^{m} d\xi_i\wedge dx_i.$$
\begin{proposition}\label{liouville}
	Let $M$ be a manifold. The cotangent bundle $T^{*}M$ of $M$ endowed with   the canonical symplectic form $d\theta$ is a symplectic manifold. 	
\end{proposition}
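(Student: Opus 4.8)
The plan is to verify directly the two defining properties of a symplectic form for the $2$-form $d\theta$: closedness and non-degeneracy. Closedness is immediate and carries no content: since $d\theta$ is by construction the exterior derivative of the $1$-form $\theta$, we have $d(d\theta)=0$ because $d\circ d=0$. Hence the entire proof reduces to showing that $d\theta$ is non-degenerate at every point of $T^{*}M$, i.e. that for each $(x,\alpha_x)\in T^{*}M$ the map $v\mapsto d\theta_{(x,\alpha_x)}(v,\cdot)$ from $T_{(x,\alpha_x)}T^{*}M$ to its dual is injective.

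To do this I would pass to a cotangent-induced chart. Given a coordinate chart $(U,x^1,\dots,x^m)$ on $M$ with associated chart $(T^{*}U,x^1,\dots,x^m,\xi_1,\dots,\xi_m)$ on $T^{*}M$, I would first confirm that the \emph{intrinsic} definition of $\theta$ yields the local expression $\theta=\sum_{i=1}^{m}\xi_i\,dx^i$. This is checked by evaluating $\theta_{(x,\alpha_x)}$ on the coordinate basis vectors: since $T^{*}\hspace{-0.1cm}\pi$ sends $\tfrac{\partial}{\partial x^j}$ to $\tfrac{\partial}{\partial x^j}$ and annihilates $\tfrac{\partial}{\partial \xi_j}$, one gets $\theta(\tfrac{\partial}{\partial x^j})=\alpha_x(\tfrac{\partial}{\partial x^j})=\xi_j$ and $\theta(\tfrac{\partial}{\partial \xi_j})=0$. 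Differentiating gives $d\theta=\sum_{i=1}^{m}d\xi_i\wedge dx^i$, exactly the coordinate formula stated before the proposition.

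Non-degeneracy is then a pointwise linear-algebra verification. With respect to the ordered basis $\bigl(\tfrac{\partial}{\partial x^1},\dots,\tfrac{\partial}{\partial x^m},\tfrac{\partial}{\partial \xi_1},\dots,\tfrac{\partial}{\partial \xi_m}\bigr)$ the Gram matrix of $d\theta$ is the standard block matrix $\left(\begin{smallmatrix}0 & -I_m\\ I_m & 0\end{smallmatrix}\right)$, which is invertible; equivalently, if $v=\sum_j a^j\tfrac{\partial}{\partial x^j}+\sum_j b_j\tfrac{\partial}{\partial \xi_j}$ satisfies $d\theta(v,w)=0$ for all $w$, then taking $w=\tfrac{\partial}{\partial \xi_j}$ forces $a^j=0$ and taking $w=\tfrac{\partial}{\partial x^j}$ forces $b_j=0$, so $v=0$. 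Since the domains $T^{*}U$ cover $T^{*}M$ and non-degeneracy is a local (indeed pointwise) condition, this establishes non-degeneracy of $d\theta$ globally, and together with closedness it shows that $(T^{*}M,d\theta)$ is a symplectic manifold.

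The only genuinely delicate point — and it is more a matter of logical hygiene than a real obstacle — is ensuring that the coordinate identity $\theta=\sum_i\xi_i\,dx^i$ is \emph{derived} from the chart-independent definition of $\theta$, so that the argument has coordinate-free substance rather than merely postulating the local form; once that identity is in hand, the rest is routine. I would also remark in passing that $\dim T^{*}M=2m$ is even and that $(d\theta)^{\wedge m}$ is a nowhere-vanishing top-degree form, which gives an alternative formulation of non-degeneracy and the canonical Liouville volume form, though neither is needed for the statement.
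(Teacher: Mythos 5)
Your proof is correct and is the standard argument: closedness is automatic from $d\circ d=0$, and non-degeneracy follows from deriving the local expression $\theta=\sum_i\xi_i\,dx^i$ from the intrinsic definition and reading off the invertible Gram matrix of $d\theta=\sum_i d\xi_i\wedge dx^i$. The paper itself gives no proof of this proposition --- it records exactly these two coordinate formulas just before the statement and defers to the cited references --- so your argument fills in precisely the routine verification the paper omits, with no divergence in approach.
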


\subsubsection{Bi-Lagrangian (Para-K\"{a}hler) manifolds}

In this part, we briefly give some needed results concerning  Hess  affine bi-Lagrangian structures and the push forward of a bi-Lagrangian structure.
The  result characterizes affine bi-Lagrangian structures.
\begin{theorem}\cite[Theor. 2:, p. 159]{7} \label{c10}
	Let  $(\omega,\mathcal{F}_1,\mathcal{F}_2)$ be a bi-Lagrangian structure on a       $2n$-manifold $M$ with
	$\nabla$ as its Hess connection. Then the following assertions are equivalent.
 	\begin{enumerate}
		\item[a.] The connection $\nabla$ is flat.
		\item[b.] Each point of $M$ has a coordinate chart adapted to $(\omega,\mathcal{F}_1,\mathcal{F}_2)$.
	\end{enumerate}
\end{theorem}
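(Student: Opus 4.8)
The plan is to establish the two implications separately, leaning on the existence and uniqueness of the Hess connection recalled in the introduction.

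\emph{From (b) to (a).} I would fix $x\in M$ and a chart $(p^1,\dots,p^n,q^1,\dots,q^n)$ on a neighbourhood $U$ adapted to $(\omega,\mathcal{F}_1,\mathcal{F}_2)$, so that $\omega=\sum_{i=1}^n dq^i\wedge dp^i$ and the coordinate fields $\partial/\partial p^i$ (resp. $\partial/\partial q^i$) span $\Gamma(\mathcal{F}_1)_{\mid U}$ (resp. $\Gamma(\mathcal{F}_2)_{\mid U}$). On $U$ I introduce the flat connection $\nabla^0$ whose Christoffel symbols in this chart all vanish, and then check the three defining properties of a Hess connection for $\nabla^0$: it is torsion-free because the coordinate fields commute and $\nabla^0$ is coordinate-trivial; it satisfies $\nabla^0\omega=0$ because the components of $\omega$ are constant in the chart; and it preserves $\mathcal{F}_1$ and $\mathcal{F}_2$ because $\nabla^0$ applied to any coordinate field is $0$, which lies in every $\Gamma(\mathcal{F}_i)$. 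Restricting the bi-Lagrangian structure to $U$, uniqueness of the Hess connection forces $\nabla_{\mid U}=\nabla^0$, so $R_\nabla$ vanishes on $U$; since such $U$ cover $M$, $\nabla$ is flat.

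\emph{From (a) to (b).} Assuming $R_\nabla=0$, I would fix $x\in M$ and, using $T_xM=T_x\mathcal{F}_1\oplus T_x\mathcal{F}_2$, pick a basis $e_1(x),\dots,e_n(x)$ of $T_x\mathcal{F}_1$ and a basis $f_1(x),\dots,f_n(x)$ of $T_x\mathcal{F}_2$. On a simply connected neighbourhood $U$ of $x$, flatness makes parallel transport path independent, so these vectors extend to $\nabla$-parallel vector fields $e_1,\dots,e_n,f_1,\dots,f_n$ on $U$. Because $\nabla$ preserves $\mathcal{F}_1$, the parallel transport of a vector in $T\mathcal{F}_1$ remains in $T\mathcal{F}_1$, hence $e_i\in\Gamma(\mathcal{F}_1)$ and, symmetrically, $f_i\in\Gamma(\mathcal{F}_2)$. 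After shrinking $U$ these $2n$ fields are pointwise independent, and since the frame is $\nabla$-parallel and $\nabla$ is torsion-free all pairwise Lie brackets vanish, e.g. $[e_i,e_j]=\nabla_{e_i}e_j-\nabla_{e_j}e_i=0$. A commuting, pointwise independent family of $2n$ vector fields is a coordinate frame, so there are coordinates $(p^1,\dots,p^n,q^1,\dots,q^n)$ on $U$ with $\partial/\partial p^i=e_i$ and $\partial/\partial q^i=f_i$, and this chart is adapted to $(\mathcal{F}_1,\mathcal{F}_2)$. To normalize $\omega$: since the leaves of $\mathcal{F}_1,\mathcal{F}_2$ are Lagrangian, $\omega(e_i,e_j)=\omega(f_i,f_j)=0$, so $\omega=\sum_{i,j}B_{ij}\,dp^i\wedge dq^j$ with $B_{ij}:=\omega(e_i,f_j)$ constant on $U$ (because $\nabla\omega=0$ and the frame is parallel) and $(B_{ij})$ invertible (because $\omega$ is non-degenerate while it vanishes on each factor of the splitting). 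Replacing $q^i$ by a constant linear combination $\tilde q^i=\sum_j C^i_j q^j$ with suitable constant $C$ — which leaves $\langle\partial/\partial\tilde q^1,\dots,\partial/\partial\tilde q^n\rangle=\Gamma(\mathcal{F}_2)$ unchanged — turns $(B_{ij})$ into the identity and gives $\omega=\sum_i d\tilde q^i\wedge dp^i$, so $(p^1,\dots,p^n,\tilde q^1,\dots,\tilde q^n)$ is adapted to $(\omega,\mathcal{F}_1,\mathcal{F}_2)$.

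The hard part will be the construction in the second implication of a $\nabla$-parallel local frame \emph{compatible with the splitting} $T\mathcal{F}_1\oplus T\mathcal{F}_2$: it relies simultaneously on the fact that flatness makes local parallel transport path independent and on the fact that $\nabla$ preserves each foliation, so that the parallel extensions of vectors tangent to $\mathcal{F}_i$ stay sections of $\mathcal{F}_i$. Everything afterwards — commuting parallel frame $\Rightarrow$ coordinates, constancy of the components of $\omega$, and the linear normalization of $(B_{ij})$ — is routine, as is the whole converse implication, which only repackages the obvious flat connection attached to an adapted chart and invokes uniqueness of the Hess connection.
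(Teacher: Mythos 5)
The paper does not actually prove this theorem: it is imported verbatim from Hess (reference \cite{7}, Theorem 2, p.~159) and used as a black box, so there is no in-paper argument to compare yours against. Judged on its own, your proof is correct and is essentially the standard argument. The direction (b)$\Rightarrow$(a) via the coordinate-trivial connection $\nabla^0$ of an adapted chart plus uniqueness of the Hess connection is clean; just note that ``preserves $\mathcal{F}_i$'' must be checked for arbitrary sections $Y=Y^j\partial/\partial p^j$, not only coordinate fields, which the Leibniz rule handles in one line since $\nabla^0_X Y = X(Y^j)\,\partial/\partial p^j$. In (a)$\Rightarrow$(b), the one step you should spell out is the claim that parallel transport preserves $T\mathcal{F}_i$: writing a parallel field along a curve in a frame adapted to the splitting $T\mathcal{F}_1\oplus T\mathcal{F}_2$, the components transverse to $T\mathcal{F}_i$ satisfy a homogeneous linear ODE (precisely because $\nabla\,\Gamma(\mathcal{F}_i)\subseteq\Gamma(\mathcal{F}_i)$), so they vanish identically when they vanish initially. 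The rest --- torsion-freeness killing the brackets of the parallel frame, the commuting-frame coordinate theorem, constancy of $B_{ij}=\omega(e_i,f_j)$ from $\nabla\omega=0$, and the constant linear change of the $q$-coordinates (take $C=-B$ so as to land on $\omega=\sum_i d\tilde q^i\wedge dp^i$ with the paper's sign convention) --- is routine and correctly handled.
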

The following result shows how to push forward a bi-Lagrangian structure.
\begin{lemma}\cite[Lem. 2.2., p. 6]{TNB3}\label{Bilem1}
	Let $(M,\omega,\mathcal{F}_1,\mathcal{F}_2)$ be a bi-Lagrangian manifold  with $\nabla$ as its Hess connection, and let $N$ be a manifold which is diffeomorphic to $M$. Then for any diffeomorphism $\psi : M\longrightarrow N$, the structure $(( \psi^{-1})^*\omega,\psi_*\mathcal{F}_1, \psi_*\mathcal{F}_2)$ is  bi-Lagrangian  on $N$.
\end{lemma}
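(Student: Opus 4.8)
The plan is to verify directly that each of the three pieces of the candidate structure on $N$ has the required property, transporting everything through $\psi$. First I would record the basic naturality facts: for a diffeomorphism $\psi\colon M\to N$, the pullback $(\psi^{-1})^*$ is the same as pushforward of forms, it commutes with $d$, and it sends non-degenerate forms to non-degenerate forms because $T_x\psi$ is a linear isomorphism at every point. Hence $(\psi^{-1})^*\omega$ is closed (since $d(\psi^{-1})^*\omega=(\psi^{-1})^*d\omega=0$) and non-degenerate, i.e.\ a symplectic form on $N$. This is the routine part.

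Next I would check that $\psi_*\mathcal{F}_1$ and $\psi_*\mathcal{F}_2$ are foliations on $N$; this is already noted in the excerpt (the pushforward of a foliation by a diffeomorphism is a foliation, with $\Gamma(\psi_*\mathcal{F})=\psi_*\Gamma(\mathcal{F})$ by \eqref{Bieq2}). So the real content is: (i) they are transversal, and (ii) each is Lagrangian with respect to $(\psi^{-1})^*\omega$. For transversality, since $TM=T\mathcal{F}_1\oplus T\mathcal{F}_2$ and $T\psi$ is a fiberwise isomorphism carrying $T_x\mathcal{F}_i$ onto $T_{\psi(x)}(\psi_*\mathcal{F}_i)$, we get $TN=T(\psi_*\mathcal{F}_1)\oplus T(\psi_*\mathcal{F}_2)$ directly. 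For the Lagrangian condition, take $X\in\Gamma(\psi_*\mathcal{F}_i)$, so $X=\psi_*\tilde X$ with $\tilde X\in\Gamma(\mathcal{F}_i)$, and compute, for arbitrary $Y=\psi_*\tilde Y\in\mathfrak{X}(N)$,
\begin{equation*}
\big((\psi^{-1})^*\omega\big)(\psi_*\tilde X,\psi_*\tilde Y)=\omega(\tilde X,\tilde Y)\circ\psi^{-1},
\end{equation*}
which vanishes for all $\tilde Y$ exactly when $\tilde Y\in\Gamma(\mathcal{F}_i)$, i.e.\ exactly when $Y\in\Gamma(\psi_*\mathcal{F}_i)$; thus $\Gamma(\psi_*\mathcal{F}_i)^{\perp}=\Gamma(\psi_*\mathcal{F}_i)$ with respect to $(\psi^{-1})^*\omega$. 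Combining, $((\psi^{-1})^*\omega,\psi_*\mathcal{F}_1,\psi_*\mathcal{F}_2)$ is a bi-Lagrangian structure on $N$, and $(N,(\psi^{-1})^*\omega,\psi_*\mathcal{F}_1,\psi_*\mathcal{F}_2)$ is a bi-Lagrangian manifold.

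The only mildly delicate point — and the step I would be most careful about — is the bookkeeping of which $*$-operation acts on which object: one must make sure that $(\psi^{-1})^*\omega$ (pullback by $\psi^{-1}$, equivalently pushforward by $\psi$ in the sense of forms) is paired with $\psi_*$ of the vector fields so that the change-of-variables identity above comes out with no stray inverse. Once the identity $\big((\psi^{-1})^*\omega\big)(\psi_*\tilde X,\psi_*\tilde Y)=(\omega(\tilde X,\tilde Y))\circ\psi^{-1}$ is in hand, everything else is immediate. The statement about the Hess connection in the hypothesis plays no role in the conclusion as stated — it is merely carried along because the ambient objects are assumed to be bi-Lagrangian manifolds; one could additionally remark that $\psi$ conjugates the Hess connection of $M$ to that of $N$ by uniqueness (Theorem stated in \cite{7}), since the pushforward connection $\psi_*\nabla$ is torsion-free, parallelizes $(\psi^{-1})^*\omega$, and preserves both $\psi_*\mathcal{F}_i$, but this is not needed for the lemma itself.
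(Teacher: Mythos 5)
The paper does not actually prove this lemma --- it is imported verbatim from the reference \cite{TNB3} with no argument reproduced here --- so there is no in-paper proof to compare against. Your direct verification is correct and complete: the change-of-variables identity $\bigl((\psi^{-1})^*\omega\bigr)(\psi_*\tilde X,\psi_*\tilde Y)=\bigl(\omega(\tilde X,\tilde Y)\bigr)\circ\psi^{-1}$ is exactly the right pivot, and the closedness/non-degeneracy, transversality, and Lagrangian conditions all transport through it as you describe; your closing remark that the Hess connection is carried along by uniqueness rather than being needed for the statement is also accurate.
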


\subsubsection{Cherry (vector) map}
Cherry vectors have been studied by several authors. Among the many results found on the theory, we have the following one found in
  \cite[\S 6.2, 6.3, p. 148-149]{LP1}.
\begin{proposition}\label{BiTprop1}
Let $X$  be a Cherry vector field on $\mathbb{T}^2\approx \mathbb{S}^1\times [0,1]$ with $t\mapsto\Phi_X^t$ as its flow at the neighborhood of a point $s\in \mathbb{T}^2 $, let $\Delta$ be the set of points $x\in \mathbb{S}^1\times\{0\}$ such that $\Phi_X^t(x)\in \mathbb{S}^1\times\{1\}$ for some  $t>0$, and let $t(x)$ be the minimal $t>0$ such that $\Phi_X^t(x)\in \mathbb{S}^1\times\{1\}$. Then the map 
\begin{equation*}
f:\Delta\longrightarrow \mathbb{S}^1, \; x\mapsto\Phi_X^{t(x)}(x)
\end{equation*} 
belongs to $\mathscr{L}$. 
\end{proposition}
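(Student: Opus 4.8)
The idea is to extract the conclusion from the phase portrait of a Cherry flow together with the local behaviour of the flow near its hyperbolic saddle. Present $\mathbb{T}^2$ as the cylinder $\mathbb{S}^1\times[0,1]$ with $\mathbb{S}^1\times\{0\}$ identified to $\mathbb{S}^1\times\{1\}$, and write $C$ for this common circle; by construction the decomposition is chosen so that $X$ is transverse to $C$ and crosses it in the positive $[0,1]$-direction, and the sink $p$ and saddle $q$ of $X$ lie in the open cylinder $\mathbb{S}^1\times(0,1)$ (this is the standard setting for a Cherry flow, cf. \cite{TC, LP1}). Since $X$ has no closed trajectory, a Poincar\'e--Bendixson-type argument inside the cylinder shows that for $x\in C$ the semi-trajectory $\gamma^+(x)$ either returns to $C$ --- so $x\in\Delta$ --- or is trapped in $\mathbb{S}^1\times(0,1)$ with $\omega$-limit set equal to $\{p\}$ or contained in the stable manifold $W^s(q)$. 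Consequently $\Delta$ is open; flowing the two local stable separatrices of $q$ backward to their first intersections $a,b$ with $C$, one identifies $C\setminus\Delta$ with the closed arc $\overline{U}=[a,b]$ whose interior $U$ consists of points funnelling into the basin of $p$ without recrossing $C$ and whose endpoints lie on $W^s(q)$. Thus $f$ is defined exactly on $\Delta=\mathbb{S}^1\setminus\overline{U}$, and ``$f\in\mathscr{L}$'' is to be read for the extension $\widehat f$ of $f$ to $\mathbb{S}^1$ produced below.

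First I would verify conditions (2) and (1) of the definition of $\mathscr{L}$. For (2): on the open set $\Delta$, $f$ is the first-return map of the smooth flow $\Phi_X$ to the transversal $C$, hence smooth; it is injective because two points of $C$ on a common $X$-orbit with minimal positive return times coincide, and its inverse is the first backward-return map, again smooth, so $f|_\Delta$ is a diffeomorphism onto its image. For (1): let $u$ be the unstable separatrix of $q$ lying, near $q$, on the same side of $W^s(q)$ as the $\Delta$-orbits (the other separatrix falls into the funnel region and converges to $p$), and let $c$ be the first intersection of $u$ with $C$. If $x_n\to a$ or $x_n\to b$ inside $\Delta$, then $\gamma^+(x_n)$ stays near the orbit of $a$ (resp. $b$) for a long time, hence near $q$, and by the $\lambda$-lemma (inclination lemma) leaves a neighbourhood of $q$ shadowing $u$ and crosses $C$ at a point tending to $c$; since both endpoints of $\overline{U}$ abut the same side of $W^s(q)$ near $q$, the limit is $c$ from either side. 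Hence $f$ extends continuously to $\widehat f\colon\mathbb{S}^1\to\mathbb{S}^1$ with $\widehat f\equiv c$ on $\overline{U}$, i.e.\ the image of $U$ is the single point $c$, and $U$ is the flat piece. Finally $\widehat f$ preserves orientation, because the return map of a transversal of a flow does, and lifting it to $\mathbb{R}$ yields a lift commuting with the unit translation, so $\widehat f$ is a degree-one circle map and the rotation number $\rho(f)$ is well defined.

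The substantial point is condition (3). On a left-sided neighbourhood of $a$ one factors $\widehat f$ as a composition of three maps: a smooth diffeomorphism from $C$ near $a$ onto a small transversal $\sigma_{\mathrm{in}}$ crossing $W^s(q)$, sending $a$ onto $W^s(q)$ and $x-a$ to a transverse coordinate $\xi$; the saddle transition (Dulac) map from $\sigma_{\mathrm{in}}$ to a transversal $\sigma_{\mathrm{out}}$ crossing $u$; and a smooth diffeomorphism from $\sigma_{\mathrm{out}}$ onto $C$ near $c$. For a hyperbolic saddle with eigenvalues $-\lambda<0<\mu$ the linear model gives $\xi\mapsto\xi^{\lambda/\mu}$ for the Dulac map, and what is needed is the nonlinear version: after smooth coordinate changes on $\sigma_{\mathrm{in}}$ and $\sigma_{\mathrm{out}}$ the transition map is $\xi\mapsto\xi^{\ell_1}$ with $\ell_1=\lambda/\mu$, so that, absorbing the two outer diffeomorphisms, $\widehat f=h_l((x-a)^{\ell_1})$ for a diffeomorphism $h_l$ of a two-sided neighbourhood of $a$. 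The analogous analysis at $b$ gives $\ell_2$ and $h_r$, and then $\widehat f\in\mathscr{L}$ with critical exponents $(\ell_1,\ell_2)$ and flat piece $U$. I expect this local saddle analysis --- obtaining the exact power form with a genuine diffeomorphism, which requires a $C^k$-linearisation or a Dulac-type estimate near the saddle, controlling the sign and one-sidedness and excluding the logarithmic corrections that arise in the resonant case --- to be the main obstacle; the global facts used in the first paragraph are standard for Cherry flows and may be quoted from \cite{LP1}.
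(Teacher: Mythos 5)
The paper contains no proof of Proposition~\ref{BiTprop1}: it is presented in the ``Technical tools'' subsection as a quoted result, with the citation \cite[\S 6.2, 6.3, p.~148-149]{LP1} (it is essentially Theorem~B of \cite{MSM}), so there is nothing in-paper to compare your argument against line by line. That said, your sketch is the standard proof from exactly those references, and its outline is correct: realize $f$ as the first-return map of the flow to the transverse circle $C=\mathbb{S}^1\times\{0\}$; identify $C\setminus\Delta$ with the closed arc $[a,b]$ bounded by the first backward intersections of the two stable separatrices of the saddle, whose interior is swept into the basin of the sink (this yields conditions (1) and (2) of the definition of $\mathscr{L}$, the flat value being the first crossing of the escaping unstable separatrix with $C$); and obtain condition (3) by factoring the return map near $a$ and near $b$ through the saddle transition map, which in linearizing coordinates is $\xi\mapsto\xi^{\lambda/\mu}$, where $-\lambda<0<\mu$ are the eigenvalues at the saddle. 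Note that this automatically gives $\ell_1=\ell_2=\lambda/\mu$, consistent with the paper's later use of symmetric exponent pairs $(\ell_i,\ell_i)$ for maps generated by a single Cherry field in Theorem~\ref{Bitheo2}. You correctly isolate the one genuinely delicate step: to obtain exactly $h_l((x-a)^{\ell_1})$ with $h_l$ a diffeomorphism (rather than a H\"older or logarithmically corrected estimate) one needs a smooth linearization or Dulac-type normal form at the saddle, which is where the cited references do the real analytic work; a complete write-up would also have to justify the two global facts you assume at the outset, namely that $\mathbb{S}^1\times\{0\}$ can be chosen transverse to $X$ with both singularities in the open cylinder, and that the Poincar\'e--Bendixson alternative in the annulus cannot produce a homoclinic graphic of the saddle as an $\omega$-limit set (the paper's definition of a Cherry field excludes only closed trajectories).
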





\section{Statements and proofs of results}

\subsection{Statements of  results}
Our first result presents lifted bi-Lagrangian structures on the trivial bundles of some manifolds.
\begin{theorem}\label{Bitheo1}
	Let $(M, \omega,\mathcal{F}_1,\mathcal{F}_2)$ be a bi-Lagrangian with $\nabla$ as its Hess connection. We have:
	\begin{enumerate}
	\item the quadruplet $(T^*M, d\theta,  N^*\mathcal{F}_1,N^*\mathcal{F}_2)$ is an affine bi-Lagrangian manifold; 
	\item  the expression $(T^*M, \tilde{\omega}:=^*\hspace{-0.1cm}\pi^*\omega+d\theta,  N^*\mathcal{F}_1,N^*\mathcal{F}_2)$ is a bi-Lagrangian manifold. 
	\item the object $(TM, \omega^c,\mathcal{F}_1^c,\mathcal{F}_2^c)$ is a bi-Lagrangian manifold with $\nabla^c$ as its Hess connection. Moreover, if $(M, \omega,\mathcal{F}_1,\mathcal{F}_2)$ is affine, then so is  $(M, \omega^c,\mathcal{F}_1^c,\mathcal{F}_2^c)$. 
	\end{enumerate}
\end{theorem}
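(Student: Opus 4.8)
The plan is to treat the three parts in increasing order of difficulty, but they share a common strategy: exhibit explicit adapted coordinates (or adapted-up-to-the-symplectic-form coordinates) and use Theorem~\ref{c10} to recognize flatness, while using Lemma~\ref{Bilem1} and the lift formulas \eqref{lift of connection}--\eqref{lift of foliation} to transport the bi-Lagrangian data. For part (1), I would start from an arbitrary point of $M$, pick a chart $(U,x^1,\dots,x^m)$, and pass to the associated cotangent chart $(T^*U,x^1,\dots,x^m,\xi_1,\dots,\xi_m)$. The first task is to identify $\Gamma(N^*\mathcal{F}_i)$ in these coordinates: a covector $\xi\in T^*M$ lies in $N^*\mathcal{F}_1$ iff it annihilates $T\mathcal{F}_1$, so if I choose the base chart to be adapted to $(\mathcal{F}_1,\mathcal{F}_2)$ in the sense of the excerpt — coordinates $(p^1,\dots,p^n,q^1,\dots,q^n)$ with $T\mathcal{F}_1=\langle\partial_{p^i}\rangle$, $T\mathcal{F}_2=\langle\partial_{q^i}\rangle$ — then with dual fibre coordinates $(\eta_1,\dots,\eta_n,\zeta_1,\dots,\zeta_n)$ the conormal bundle $N^*\mathcal{F}_1$ is the locus $\eta_i=0$ and $N^*\mathcal{F}_2$ is the locus $\zeta_i=0$. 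One checks $N^*\mathcal{F}_1$ and $N^*\mathcal{F}_2$ are transversal (their tangent spaces are complementary in $T(T^*M)$) and that each is Lagrangian for $d\theta=\sum d\eta_i\wedge dp^i+\sum d\zeta_i\wedge dq^i$ — immediate since on $\{\eta=0\}$ the tangent directions are $\partial_{p^i},\partial_{q^i},\partial_{\zeta_i}$, on which $d\theta$ plainly vanishes, and dimension count gives Lagrangian. Finally these very coordinates $(p^i,q^i,\eta_i,\zeta_i)$, suitably ordered, are adapted to $(d\theta,N^*\mathcal{F}_1,N^*\mathcal{F}_2)$, so Theorem~\ref{c10} gives flatness of the Hess connection; that is part (1).

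For part (2), $\tilde\omega={}^*\hspace{-0.1cm}\pi^*\omega+d\theta$ is still closed (sum of closed forms) and non-degenerate (in the above coordinates ${}^*\hspace{-0.1cm}\pi^*\omega=\sum dq^i\wedge dp^i$ involves only base differentials while $d\theta$ supplies the $d\eta\wedge dp$, $d\zeta\wedge dq$ pieces, so the total matrix is block-triangular with invertible blocks), hence symplectic. One then rechecks that $N^*\mathcal{F}_1$ and $N^*\mathcal{F}_2$ remain Lagrangian for $\tilde\omega$: on $N^*\mathcal{F}_1=\{\eta=0\}$ the tangent directions are $\partial_{p^i},\partial_{q^i},\partial_{\zeta_i}$, and $\tilde\omega$ on such pairs is $dq^i\wedge dp^i+d\zeta_i\wedge dq^i$ evaluated on vectors with no $dp$, $d\eta$, $d\zeta$ components in the relevant slots — again zero — and the leaves have the right dimension $2n$. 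Transversality is unchanged since the underlying submanifolds are the same. So $(T^*M,\tilde\omega,N^*\mathcal{F}_1,N^*\mathcal{F}_2)$ is bi-Lagrangian; note one does not claim affineness here, consistent with the statement.

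For part (3), the cleanest route is to use the lift identities. First, $\omega^c$ is a $2$-form (complete lift of a $2$-form), closed because $d$ commutes with the complete lift, and non-degenerate because in induced coordinates the complete lift of $\sum dq^i\wedge dp^i$ has the standard sympletic block form on $TM$; so $(TM,\omega^c)$ is symplectic. Next, $\mathcal{F}_i^c$ is a foliation by \eqref{lift of foliation}, and $T\mathcal{F}_1^c\oplus T\mathcal{F}_2^c=T(TM)$ because the complete lifts of a local frame adapted to $\mathcal{F}_1\oplus\mathcal{F}_2$ give, together with the corresponding vertical lifts, a frame of $T(TM)$ (the standard fact that $\{X_a^c,X_a^v\}$ frames $T(TM)$ when $\{X_a\}$ frames $TM$), and the vertical lift of a section of $\mathcal{F}_i$ is also a section of $\mathcal{F}_i^c$. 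That $\mathcal{F}_1^c$ is Lagrangian for $\omega^c$ follows from \eqref{eqlift3}: $\omega^c(X^c,Y^c)=(\omega(X,Y))^c$ vanishes for $X,Y\in\Gamma(\mathcal{F}_1)$, and one extends from complete lifts to all sections by the module structure and the dimension count $\dim\mathcal{F}_1^c=2n=\tfrac12\dim TM$. Then I would verify that $\nabla^c$ is torsion-free, parallelizes $\omega^c$, and preserves $\mathcal{F}_i^c$: torsion-freeness from $T_{\nabla^c}(X^c,Y^c)=(T_\nabla(X,Y))^c=0$ on complete lifts plus tensoriality; $\nabla^c\omega^c=(\nabla\omega)^c=0$ by \eqref{lct}; preservation of $\mathcal{F}_i^c$ from $\nabla^c_{X^c}Y^c=(\nabla_XY)^c\in\Gamma(\mathcal{F}_i^c)$ when $Y\in\Gamma(\mathcal{F}_i)$. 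Uniqueness of the Hess connection then forces it to equal $\nabla^c$. Finally, if $(M,\omega,\mathcal{F}_1,\mathcal{F}_2)$ is affine, $R_{\nabla^c}(X^c,Y^c)Z^c=(R_\nabla(X,Y)Z)^c=0$, and by tensoriality of the curvature (and the fact that complete lifts span over $C^\infty(TM)$ up to lower-order terms handled by the $(fX)^c$ formula in \eqref{eqlift1}) $R_{\nabla^c}=0$, so the lifted structure is affine too.

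The main obstacle I expect is the bookkeeping in part (3) around the claim that an identity verified on complete lifts $X^c,Y^c,Z^c$ extends to arbitrary vector fields: complete lifts do not form a $C^\infty(TM)$-module, so one must genuinely use formulas \eqref{eqlift1} for $(fX)^c=f^cX^v+f^vX^c$ together with the tensorial (pointwise) nature of $\omega^c$, $T_{\nabla^c}$, $R_{\nabla^c}$ — in other words, argue that two tensors of the same type agreeing on a set of vector fields whose values at each point span the tangent space must coincide, and check that $\{X^c_{(x,v)}\}$ does span $T_{(x,v)}(TM)$ as $X$ ranges over $\mathfrak X(M)$. This is standard (Yano--Ishihara) but needs to be stated carefully; similarly, matching the ordering of coordinates in parts (1) and (2) to literally fit the "adapted chart" definition is a small but necessary chore.
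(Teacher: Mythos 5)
Your overall strategy (adapted coordinates plus Theorem~\ref{c10} for parts 1--2, lift identities plus uniqueness of the Hess connection for part 3) is the same as the paper's, but there is a concrete error in your identification of the conormal foliations that invalidates your verifications in parts 1 and 2 as written. You declare ``$N^*\mathcal{F}_1$ is the locus $\eta_i=0$'' and take its tangent directions to be $\frac{\partial}{\partial p^i},\frac{\partial}{\partial q^i},\frac{\partial}{\partial \zeta_i}$. That is the $3n$-dimensional \emph{total space} of the conormal bundle, not the Lagrangian foliation the theorem needs. The object $N^*\mathcal{F}_1$ is the foliation of $T^*M$ whose leaves are the conormal bundles $N^*S$ of the individual leaves $S=\{q=\mathrm{const}\}$ of $\mathcal{F}_1$; each such leaf is $2n$-dimensional with tangent space $\left<\frac{\partial}{\partial p^i},\frac{\partial}{\partial \zeta_i}\right>$ (the $\frac{\partial}{\partial q^i}$ must \emph{not} appear, since $q$ is constant along a leaf). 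With your $3n$-dimensional candidate, every one of your checks fails: $d\theta=\sum d\eta_i\wedge dp^i+\sum d\zeta_i\wedge dq^i$ does \emph{not} vanish on $\left<\partial_{p},\partial_{q},\partial_{\zeta}\right>$ (it pairs $\partial_{\zeta_i}$ with $\partial_{q^i}$ nontrivially); two $3n$-dimensional subspaces of a $4n$-dimensional space cannot have complementary tangent spaces, so your transversality claim is impossible; and the ``dimension count'' gives $3n$, not the required $2n$. The same misidentification recurs verbatim in your part 2 check for $\tilde\omega$, where in addition $\pi^*\omega(\partial_{p},\partial_{q})\neq 0$ would also obstruct isotropy of your candidate. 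The fix is simply to drop the $\partial_{q^i}$ (resp.\ $\partial_{\eta_i}$) directions: with $\Gamma(N^*\mathcal{F}_1)=\left<\partial_{p^i},\partial_{\zeta_i}\right>$ and $\Gamma(N^*\mathcal{F}_2)=\left<\partial_{q^i},\partial_{\eta_i}\right>$ (this is the paper's display (\ref{Bieq9})), isotropy for $d\theta$ and for $\tilde\omega$, transversality, and the existence of a chart adapted to $(d\theta,N^*\mathcal{F}_1,N^*\mathcal{F}_2)$ all hold, and the argument then coincides with the paper's.

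Your part 3 is correct and is essentially the paper's proof (use $\omega^c(X^c,Y^c)=(\omega(X,Y))^c$, $T_{\nabla^c}=(T_\nabla)^c=0$, $\nabla^c\omega^c=(\nabla\omega)^c=0$, preservation of $\Gamma(\mathcal{F}_i^c)$, then uniqueness of the Hess connection, and $R_{\nabla^c}=(R_\nabla)^c$ for the affine claim). Your remark that identities verified on complete lifts must be extended to arbitrary sections via the $(fX)^c=f^cX^v+f^vX^c$ rule and pointwise spanning is a genuine point of care that the paper passes over in silence; making it explicit would strengthen, not merely reproduce, the published argument.
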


\begin{remark}\label{Birem4}	
Let $(M, \omega)$ be a symplectic manifold. Since $TM$ is  diffeomorphic to  $T^*M$  (via the map $X\mapsto \omega(X,\cdot)$), then by Lemma~\ref{Bilem1}, a bi-Lagrangian structure on $TM$ induces a bi-Lagrangian structure on   $T^*M$, and vice-versa.	
\end{remark}

\begin{corollary}
	Let $M$ be a manifold equipped with a bi-Lagrangian structure. The action $(\psi,(\mathcal{F}_1,\mathcal{F}_2) ) \mapsto (\psi_*\mathcal{F}_1,\psi_*\mathcal{F}_2)$ of the symplectomorphism group  of $(M, \omega)$ on the set of its bi-Lagrangian structures defined 	in \cite[The. 2.3., p. 6]{TNB3} can be lifted on $TM$ and $T^*M$ in different ways.
\end{corollary}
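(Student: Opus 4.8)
The plan is to exhibit, for each of the three constructions in Theorem~\ref{Bitheo1}, a natural lift of a symplectomorphism $\psi$ of $(M,\omega)$ to the relevant total bundle, to check that push-forward along that lift keeps one inside the subclass of bi-Lagrangian structures produced by Theorem~\ref{Bitheo1}, and to note that $\psi\mapsto(\text{its lift})$ is a group homomorphism, which turns each such assignment into a group action. Throughout one uses that a symplectomorphism $\psi$ satisfies $(\psi^{-1})^*\omega=\omega$, so that the action of \cite[The. 2.3., p. 6]{TNB3} sends $(\omega,\mathcal{F}_1,\mathcal{F}_2)$ to the bi-Lagrangian structure $(\omega,\psi_*\mathcal{F}_1,\psi_*\mathcal{F}_2)$ on $M$, to which Theorem~\ref{Bitheo1} then applies.

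For the cotangent bundle, attach to $\psi$ its cotangent lift $\hat\psi\colon T^*M\longrightarrow T^*M$, $(x,\alpha_x)\longmapsto\bigl(\psi(x),\alpha_x\circ(T_x\psi)^{-1}\bigr)$. This diffeomorphism covers $\psi$, i.e.\ ${}^{*}\pi\circ\hat\psi=\psi\circ{}^{*}\pi$, and $\psi\mapsto\hat\psi$ is a homomorphism. The classical identity $\hat\psi^*\theta=\theta$ gives $\hat\psi^*(d\theta)=d\theta$, and, $\hat\psi$ covering the symplectomorphism $\psi$, also $\hat\psi^*({}^{*}\pi^*\omega)={}^{*}\pi^*(\psi^*\omega)={}^{*}\pi^*\omega$, whence $\hat\psi^*\tilde\omega=\tilde\omega$. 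Moreover conormal bundles are natural under diffeomorphisms, $\hat\psi(N^*S)=N^*(\psi(S))$ for every submanifold $S\subset M$, so $\hat\psi_*(N^*\mathcal{F}_i)=N^*(\psi_*\mathcal{F}_i)$. Feeding this into Theorem~\ref{Bitheo1}(1)--(2) applied to $(M,\omega,\psi_*\mathcal{F}_1,\psi_*\mathcal{F}_2)$ gives two lifts of the action to $T^*M$,
\begin{equation*}
\bigl(\hat\psi,(N^*\mathcal{F}_1,N^*\mathcal{F}_2)\bigr)\longmapsto\bigl(N^*(\psi_*\mathcal{F}_1),N^*(\psi_*\mathcal{F}_2)\bigr),
\end{equation*}
one acting on the affine bi-Lagrangian structures of the form $(d\theta,\cdot,\cdot)$ and one on those of the form $(\tilde\omega,\cdot,\cdot)$.

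For the tangent bundle, attach to $\psi$ its differential $T\psi\colon TM\longrightarrow TM$; again $\psi\mapsto T\psi$ is a homomorphism. By naturality of the complete lift under diffeomorphisms, $(T\psi)_*(X^c)=(\psi_*X)^c$ for $X\in\mathfrak{X}(M)$; combined with (\ref{eqlift3}) and $\psi^*\omega=\omega$ this yields $(T\psi)^*(\omega^c)=\omega^c$, so $T\psi$ is a symplectomorphism of $(TM,\omega^c)$, and it also gives $(T\psi)_*\mathcal{F}_i^c=(\psi_*\mathcal{F}_i)^c$. By Theorem~\ref{Bitheo1}(3) the image is the complete lift of the bi-Lagrangian structure $(\omega,\psi_*\mathcal{F}_1,\psi_*\mathcal{F}_2)$ on $M$, so
\begin{equation*}
\bigl(T\psi,(\mathcal{F}_1^c,\mathcal{F}_2^c)\bigr)\longmapsto\bigl((\psi_*\mathcal{F}_1)^c,(\psi_*\mathcal{F}_2)^c\bigr)
\end{equation*}
is a third lift of the action; pushing it through the diffeomorphism $TM\to T^*M$, $X\mapsto\omega(X,\cdot)$, of Remark~\ref{Birem4} and Lemma~\ref{Bilem1} produces yet another incarnation on $T^*M$.

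I do not expect a genuine obstacle here: the corollary is essentially an equivariant repackaging of Theorem~\ref{Bitheo1}. The only points needing care are the compatibility identities $\hat\psi^*\theta=\theta$, $\hat\psi(N^*S)=N^*(\psi(S))$, $(T\psi)_*(X^c)=(\psi_*X)^c$ and the resulting $(T\psi)^*\omega^c=\omega^c$ --- these are what guarantee that each assignment returns a structure of exactly the type described by Theorem~\ref{Bitheo1}, and that $\psi\mapsto\hat\psi$ and $\psi\mapsto T\psi$ are homomorphisms, so that the assignments are bona fide group actions. Once these standard facts are recorded, the statement follows by assembling Theorem~\ref{Bitheo1}, Lemma~\ref{Bilem1}, Remark~\ref{Birem4} and the action of \cite[The. 2.3., p. 6]{TNB3}.
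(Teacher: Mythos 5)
Your proposal is correct and follows exactly the route the paper intends: the corollary is stated without proof as an immediate consequence of Theorem~\ref{Bitheo1} (together with Lemma~\ref{Bilem1} and Remark~\ref{Birem4}), and your write-up simply supplies the routine equivariance checks the paper leaves implicit --- the cotangent lift $\hat\psi$ with $\hat\psi^*\theta=\theta$ and $\hat\psi(N^*S)=N^*(\psi(S))$, and the tangent lift $T\psi$ with $(T\psi)_*X^c=(\psi_*X)^c$. No gap; your version is if anything more complete than the paper's.
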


\begin{theorem}~\label{Bitheo2}
\begin{enumerate}
\item Some pairs of Cherry vector fields generate maps belonging in $\mathscr{L}$. 
\item The left action \begin{equation}\label{Biact1}
*:Diff(\mathbb{T}^2)\times\mathfrak{X}_c(\mathbb{T}^2)\longrightarrow \mathfrak{X}_c(\mathbb{T}^2),\;(\varphi, X)\mapsto \varphi_*X\tag{act1}\end{equation} induces the following conjugation action \begin{equation}\label{Biact2}
\circ: Diff(\mathbb{S}^1)\times\mathscr{L}\longrightarrow\mathscr{L},\;(\varphi, f)\mapsto \varphi\circ f\circ\varphi^{-1}.\tag{act2}\end{equation}
Moreover, for every $X\in\mathfrak{X}_c(\mathbb{T}^2)$ and for every $\varphi\in Diff(\mathbb{T}^2)$,  if $X$ generates the map $f$, then $\varphi_*X$ generates the map
\begin{equation*}
\varphi\circ f\circ\varphi^{-1}: \varphi(\Delta)\longrightarrow\mathbb{S}^1,\; x\mapsto\varphi\circ \Phi_X^{t(x)}\circ\varphi^{-1}(x)
\end{equation*}
where  $\Delta$ is the set of points $x\in \mathbb{S}^1\times\{0\}$ such that $\Phi_X^t(x)\in \mathbb{S}^1\times\{1\}$ for some  $t>0$, and  $t(x)$ is the minimal $t>0$ such that $\Phi_X^t(\varphi^{-1}(x))\in \mathbb{S}^1\times\{1\}$.

\end{enumerate}
\end{theorem}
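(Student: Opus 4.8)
The plan is to treat the two assertions separately. Assertion~(1) is essentially a reading of Proposition~\ref{BiTprop1}: if $X\in\mathfrak{X}_c(\mathbb{T}^2)$ with $\mathbb{T}^2\approx\mathbb{S}^1\times[0,1]$, then away from $Sin(X)$ the field $X$ together with any field transverse to it (for instance the one tangent to the circles $\mathbb{S}^1\times\{c\}$) forms a transversal pair, and the first-hitting map $f:\Delta\to\mathbb{S}^1$, $x\mapsto\Phi^{t(x)}_X(x)$, attached to the flow already lies in $\mathscr{L}$ by that proposition; this exhibits the required examples. For assertion~(2) I would proceed in four steps: (i) show the map $*$ is well defined, i.e.\ $\varphi_*X\in\mathfrak{X}_c(\mathbb{T}^2)$ whenever $X\in\mathfrak{X}_c(\mathbb{T}^2)$ and $\varphi\in Diff(\mathbb{T}^2)$; (ii) check the two left-action axioms; (iii) identify the map generated by $\varphi_*X$ in terms of the one generated by $X$; and (iv) deduce that $\circ$ is well defined, i.e.\ that $\mathscr{L}$ is stable under conjugation by circle diffeomorphisms.

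The workhorse for all of~(2) is the flow identity $\Phi^t_{\varphi_*X}=\varphi\circ\Phi^t_X\circ\varphi^{-1}$ on the relevant neighbourhoods, which follows from $\varphi_*X=(D\varphi)\circ X\circ\varphi^{-1}$ and uniqueness of integral curves. From it, for step~(i): $Sin(\varphi_*X)=\varphi\bigl(Sin(X)\bigr)$, so $\varphi_*X$ has exactly two singularities; at a singular point $p$ of $X$ one has $D(\varphi_*X)_{\varphi(p)}=(D\varphi)_p\circ (DX)_p\circ (D\varphi)_p^{-1}$, which is conjugate to the linearization $(DX)_p$ and hence has the same spectrum, so hyperbolicity and the sink/saddle distinction are preserved; and the trajectories of $\varphi_*X$ are precisely the $\varphi$-images of those of $X$, so the absence of closed trajectories is inherited. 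Thus $\varphi_*X\in\mathfrak{X}_c(\mathbb{T}^2)$. Step~(ii) is then functoriality of pushforward: $(\mathrm{id})_*X=X$ and $(\varphi\circ\psi)_*X=\varphi_*(\psi_*X)$.

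For step~(iii), set $Y:=\varphi_*X$ and assume $\varphi$ maps each boundary circle $\mathbb{S}^1\times\{0\}$ and $\mathbb{S}^1\times\{1\}$ to itself, so that $\varphi$ descends to an element of $Diff(\mathbb{S}^1)$. Using $\Phi^t_Y=\varphi\circ\Phi^t_X\circ\varphi^{-1}$, the requirement that $\Phi^t_Y(y)\in\mathbb{S}^1\times\{1\}$ for some $t>0$ is equivalent to the requirement that $\Phi^t_X(\varphi^{-1}(y))\in\mathbb{S}^1\times\{1\}$ for some $t>0$; hence the domain of the generated map is exactly $\varphi(\Delta)$, the minimal hitting time at $y$ equals the minimal hitting time of $X$ at $\varphi^{-1}(y)$, and the generated map sends $y$ to $\varphi\bigl(\Phi^{t(\varphi^{-1}(y))}_X(\varphi^{-1}(y))\bigr)=(\varphi\circ f\circ\varphi^{-1})(y)$, which is the formula in the statement. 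For step~(iv), $\varphi\circ f\circ\varphi^{-1}$ still satisfies the defining properties of $\mathscr{L}$: the flat interval $U$ is carried to the interval $\varphi(U)$ whose image is a single point, the diffeomorphism part remains a diffeomorphism onto its image, and near the new endpoints the local normal forms $h_l\bigl((x-a)^{\ell_1}\bigr)$, $h_r\bigl((x-b)^{\ell_2}\bigr)$ get composed on both sides with diffeomorphisms, leaving the critical exponents $(\ell_1,\ell_2)$ unchanged because the power is a $C^\infty$-conjugacy invariant of the order of contact. Alternatively one may simply note that $\varphi\circ f\circ\varphi^{-1}$ is precisely the first-hitting map of the Cherry field $Y$ and re-invoke Proposition~\ref{BiTprop1}.

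The point that needs the most care — and the only genuinely non-bookkeeping step — is the interface between $Diff(\mathbb{T}^2)$ in the action on vector fields and $Diff(\mathbb{S}^1)$ in the induced conjugation action: one must restrict to diffeomorphisms of $\mathbb{T}^2\approx\mathbb{S}^1\times[0,1]$ preserving each boundary circle (so that the restriction is a bona fide circle diffeomorphism and the displayed formula $x\mapsto\varphi\circ\Phi_X^{t(x)}\circ\varphi^{-1}(x)$ takes values in $\mathbb{S}^1$), and then verify, as in step~(iv), that the flat piece and the power-law behaviour at its endpoints survive the conjugation, so that $\mathscr{L}$ is indeed invariant under $\circ$.
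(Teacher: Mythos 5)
Your treatment of assertion (2) is correct and in fact considerably more complete than the paper's own argument, which consists only of the observation that $\varphi\circ\Phi^t_X\circ\varphi^{-1}$ is the flow of $\varphi_*X$; your steps (i), (ii) and (iv) (well-definedness of $*$ on $\mathfrak{X}_c(\mathbb{T}^2)$ via conjugacy of linearizations, the action axioms, and the stability of $\mathscr{L}$ under conjugation) are all implicitly needed and the paper leaves them to the reader. The caveat you raise about restricting to diffeomorphisms preserving the boundary circles is a genuine and necessary precision that the paper glosses over.

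However, your proof of assertion (1) misses the point of that assertion and contains a genuine gap. You read ``some pairs of Cherry vector fields generate maps in $\mathscr{L}$'' as a single Cherry field paired with a transversal field, and then simply re-invoke Proposition~\ref{BiTprop1}. But that proposition already covers the single-vector-field case, so your argument proves nothing beyond what is quoted from \cite{LP1}; in particular it cannot be the content of a statement that the paper explicitly says \emph{extends} \cite[Theo.~B]{MSM} and \cite[\S 6.2, 6.3]{LP1}. The intended (and proved) content is different: a single Cherry vector field produces a first-hitting map with \emph{symmetric} critical exponents $(\ell,\ell)$, and the new construction takes \emph{two} Cherry fields $X_1$, $X_2$ generating $f_1,f_2\in\mathscr{L}$ with flat pieces $U_1=(a_1,b_1)$, $U_2=(a_2,b_2)$ satisfying $a_1<a_2\leq b_1<b_2$ and $f_1(U_1)=f_2(U_2)$, and exponents $(\ell_1,\ell_1)$ and $(\ell_2,\ell_2)$, and glues them into
\begin{equation*}
f(x)=\begin{cases}
f_1(x) & \text{if } x\in [0,a_1],\\
f_2(x) & \text{if } x\in [a_2,1],
\end{cases}
\end{equation*}
which lies in $\mathscr{L}$ with flat piece $(a_1,b_2)$ and \emph{asymmetric} critical exponents $(\ell_1,\ell_2)$. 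This gluing of two first-hitting maps, producing Cherry maps with different exponents at the two endpoints of the flat interval, is the essential idea of assertion (1), and it is absent from your proposal.
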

The point 1  of this result extends results in
\cite[Theo. B., p. 533]{MSM} and \cite[\S 6.2, 6.3, p. 148-149]{LP1}. 
Now, we are going to characterize the orbit of a Cherry vector field with respect to $*$ (see,  
(\ref{Biact1})) from the orbit via $\circ$ (see, (\ref{Biact2})) of a map in $\mathscr{L}$. For every $X\in\mathfrak{X}_c(\mathbb{T}^2)$ we denote by $f^X$ the map in $\mathscr{L}$ generated by $X$. We have the following observation.
%
\begin{remark}
The orbit with respect to $\circ$ (see, (\ref{Biact2})) of a map $f\in \mathscr{L}$ noted $\mathcal{O}_{\circ}(f)$ is defined by
\begin{equation*}
\mathcal{O}_{\circ}(f):=\{\varphi\circ f\circ\varphi^{-1},\; \varphi \mbox{ is homeomorphism}  \}.
\end{equation*}
Since two maps belonging in $\mathscr{L}$ with the same irrational rotation number belong in the same orbit (see, \cite[Theo. D., p. 535]{MSM}), then 
\begin{equation*}
\mathcal{O}_{\circ}(f):=\{g\in \mathscr{L}:\; \rho(f) =\rho(g)   \}.
\end{equation*}
As a consequence, for every $X\in\mathfrak{X}_c(\mathbb{T}^2)$ such that $\rho(f^X)$ is a irrational number, we have
\begin{equation*}
\mathcal{O}_{*}(X):=\{Y\in\mathfrak{X}_c(\mathbb{T}^2):\; \rho\left(f^X\right) =\rho\left(f^Y\right)   \}.
\end{equation*}
Note that these observations still hold  for $C^2$ Cherry vector field (or Cherry map).
\end{remark}





\subsection{Proofs of results}

\subsubsection{Lifted bi-Lagrangian structures }

\begin{proof}[Proof of Theorem~\ref{Bitheo1}]
	\begin{lemma}\label{Bilem2}
		Let $\mathcal{F}=\{S_x\}_{x\in M}$ be a $k$ foliation on a $m$ manifold $M$. Then $N^*\mathcal{F}$ is a  Lagrangian foliation on $(T^*M, d\theta)$. Moreover, if $M$ is endowed with a symplectic form $\omega$ and  $\mathcal{F}$ is Lagrangian on $(M,\omega)$,  so is $N^*\mathcal{F}$ on $(M,\omega)$.
		
	\end{lemma}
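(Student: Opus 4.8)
The plan is to work entirely in the local adapted coordinates guaranteed by the completely integrable property of the foliation and the canonical structure of the cotangent bundle. First I would fix a leaf $S_x$ of $\mathcal{F}$ and a point $y \in S_x$, and choose a foliated chart $(U, y^1,\dots,y^m)$ in which the components of $U \cap S_x$ are cut out by $y^{k+1} = \text{const}, \dots, y^m = \text{const}$; thus on $U$ the tangent bundle $T\mathcal{F}$ is spanned by $\partial/\partial y^1, \dots, \partial/\partial y^k$. The conormal space $N^*_y S_y$ consists of covectors annihilating $T_y S_y$, so in the associated cotangent chart $(T^*U, y^1,\dots,y^m,\xi_1,\dots,\xi_m)$ the conormal bundle $N^*\mathcal{F}$ is precisely the submanifold defined by $\xi_1 = 0, \dots, \xi_k = 0$ (the fibre directions dual to the leaf directions vanish), while $y^{k+1},\dots,y^m$ are constant along any single $N^*S_x$. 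Its tangent space at a point is therefore spanned by $\partial/\partial y^1, \dots, \partial/\partial y^k$ together with $\partial/\partial \xi_{k+1}, \dots, \partial/\partial \xi_m$, which is exactly $m$ = half of $\dim T^*M = 2m$ directions. This already exhibits $N^*\mathcal{F}$ as a foliation of $T^*M$ of the right dimension.

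Next I would verify the Lagrangian condition with respect to $d\theta = \sum_i d\xi_i \wedge dy^i$. Plugging the spanning vectors above into $d\theta$: among $\{\partial/\partial y^1,\dots,\partial/\partial y^k\}$ and $\{\partial/\partial \xi_{k+1},\dots,\partial/\partial \xi_m\}$, any pair of the form $(\partial/\partial y^a, \partial/\partial y^b)$, $(\partial/\partial \xi_a, \partial/\partial \xi_b)$, or $(\partial/\partial y^a, \partial/\partial \xi_b)$ with $a \le k < b$ is annihilated by $d\theta$ because the index ranges are disjoint. Hence $d\theta$ restricts to zero on $T(N^*\mathcal{F})$, and since $\dim N^*\mathcal{F} = m = \tfrac{1}{2}\dim T^*M$, the leaf is Lagrangian — maximally isotropic. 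One should also check the completely integrable/Frobenius condition, but in these coordinates the spanning vector fields are coordinate vector fields, so their brackets vanish and the distribution is involutive; equivalently one invokes that $N^*\mathcal{F}$ is genuinely a foliation because the defining equations $\xi_1 = \dots = \xi_k = 0$, $y^{k+1} = \dots = y^m = \text{const}$ patch together across foliated charts (transition functions preserve the leaf-tangent subbundle and hence its annihilator). This establishes the first sentence of the lemma.

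For the second sentence, suppose additionally $M$ carries a symplectic form $\omega$ and $\mathcal{F}$ is Lagrangian on $(M,\omega)$. I read the claim as: $N^*\mathcal{F}$ is Lagrangian on $(T^*M, {}^*\hspace{-0.1cm}\pi^*\omega)$ (the phrasing ``so is $N^*\mathcal{F}$ on $(M,\omega)$'' is a typo for the pulled-back form on $T^*M$; this is exactly what is needed for part 2 of Theorem~\ref{Bitheo1}, where $\tilde\omega = {}^*\hspace{-0.1cm}\pi^*\omega + d\theta$). Here I would again pass to adapted coordinates: since $\mathcal{F}$ is Lagrangian, choose coordinates adapted to the bi-Lagrangian structure so that $\omega = \sum_{i=1}^n dq^i \wedge dp^i$ with $\mathcal{F}$ tangent to the $\partial/\partial p^i$ directions, i.e. $k = n$, $m = 2n$. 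Then ${}^*\hspace{-0.1cm}\pi^*\omega = \sum_{i=1}^n dq^i \wedge dp^i$ viewed as a form on $T^*M$ (no $d\xi$ terms). Feeding in the spanning vectors of $T(N^*\mathcal{F})$ — namely $\partial/\partial p^1,\dots,\partial/\partial p^n$ (the leaf directions, indices $1$ through $k=n$) and $\partial/\partial \xi_{n+1},\dots,\partial/\partial \xi_{2n}$, the fibre directions dual to $q^1,\dots,q^n$ — one sees $\sum dq^i \wedge dp^i$ vanishes on all pairs: it pairs $q$'s with $p$'s, but $N^*\mathcal{F}$ contains no $\partial/\partial q^i$ direction, and the $\partial/\partial \xi$ directions are not seen by ${}^*\hspace{-0.1cm}\pi^*\omega$ at all. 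So the restriction is zero, and by the dimension count $N^*\mathcal{F}$ is Lagrangian for ${}^*\hspace{-0.1cm}\pi^*\omega$ as well, hence also for $\tilde\omega = {}^*\hspace{-0.1cm}\pi^*\omega + d\theta$ by adding the two vanishing restrictions.

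I expect the main obstacle to be bookkeeping rather than anything deep: carefully setting up the conormal bundle in coordinates and being sure that the ``extra'' tangent directions of $N^*S_x$ are the fibre directions $\partial/\partial\xi_{k+1},\dots,\partial/\partial\xi_m$ conjugate to the transverse coordinates (not the leaf ones) requires a moment's care with which indices are annihilated, and one must confirm that $N^*\mathcal{F}$ is honestly a smooth foliation globally — i.e. that the local pictures glue — which follows because conormal bundles of leaves are intrinsically defined and the foliated-chart transition maps respect the splitting. A secondary point worth a sentence is reconciling the (apparently mis-stated) ``on $(M,\omega)$'' in the lemma with the ``on $(T^*M,\tilde\omega)$'' that the application in Theorem~\ref{Bitheo1} actually requires; I would simply prove the $T^*M$ statement, which is the useful one, noting the isotropy holds for ${}^*\hspace{-0.1cm}\pi^*\omega$, for $d\theta$, and hence for any linear combination.
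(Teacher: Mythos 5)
Your argument is correct in substance and lands on the same local description as the paper --- your spanning set $\partial/\partial y^1,\dots,\partial/\partial y^k,\partial/\partial\xi_{k+1},\dots,\partial/\partial\xi_m$ is exactly the paper's display (\ref{Bieq8}) --- but you get there by a different route. For the first sentence of the lemma the paper splits the work: it quotes the general fact that the conormal bundle of a single submanifold is Lagrangian in $(T^*M,d\theta)$ (citing \cite{dasilva}) to get isotropy leaf by leaf, and then proves involutivity by the identity $d\theta([X,Y],Z)=[X,Y]\theta(Z)-Z\theta([X,Y])-\theta([[X,Y],Z])$ evaluated on arbitrary sections in coordinates. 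You instead check both isotropy and involutivity directly on the coordinate frame, where the brackets vanish and the index ranges of $d\theta=\sum d\xi_i\wedge dy^i$ are disjoint; this is shorter and avoids the general-section computation entirely, at the cost of having to say a word (as you do) about why the local pictures glue. Both are fine. For the second sentence you are also right that the statement ``so is $N^*\mathcal{F}$ on $(M,\omega)$'' must be read on $T^*M$; the paper proves it for $\tilde\omega={}^*\hspace{-0.1cm}\pi^*\omega+d\theta$ via the coordinate-free observation that ${}^*\hspace{-0.1cm}\pi_*X\in\Gamma(\mathcal{F})$ for $X\in\Gamma(N^*\mathcal{F})$, so ${}^*\hspace{-0.1cm}\pi^*\omega(X,Y)=\omega({}^*\hspace{-0.1cm}\pi_*X,{}^*\hspace{-0.1cm}\pi_*Y)=0$ by the Lagrangian hypothesis.

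One step of yours needs repair, though it is easily repaired. In the second part you ``choose coordinates adapted to the bi-Lagrangian structure so that $\omega=\sum dq^i\wedge dp^i$.'' The lemma gives you only one Lagrangian foliation, not a bi-Lagrangian structure; and even when a second transverse Lagrangian foliation is present, a chart adapted to $(\omega,\mathcal{F}_1,\mathcal{F}_2)$ with $\omega$ in standard form exists precisely when the Hess connection is flat (Theorem~\ref{c10}), which is not assumed here. You do not actually need the normal form: in any foliated chart, ${}^*\hspace{-0.1cm}\pi^*\omega$ contains no $d\xi_i$, so it kills every pair involving a $\partial/\partial\xi_b$, and on the remaining pairs $\bigl(\partial/\partial y^a,\partial/\partial y^{a'}\bigr)$ with $a,a'\le k$ it equals $\omega$ evaluated on leaf directions, which vanishes because $\mathcal{F}$ is Lagrangian. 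That is exactly the paper's ${}^*\hspace{-0.1cm}\pi_*$ argument in coordinates, and substituting it closes the gap.
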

\begin{proof}
First note that if $S\subset M$ is a sub-manifold, then $N^*S$ is a Lagrangian sub-manifold of $(T^*M, d\theta)$ (this is a general result, see \cite[Cor. 3.7., p. 18]{dasilva}). As a consequence, $N^*\mathcal{F}=\{N^*S_x\}_{x\in M}$ is Lagrangian on $(T^*M, d\theta)$.
Thus it remains to show that $N^*\mathcal{F}$ is completely integrable. 
This means
\begin{equation*}
d\theta([X,Y], Z)=0\; \mbox{ for all } X,Y,Z\in\Gamma(N^*\mathcal{F})
\end{equation*}
 since $N^*\mathcal{F}$ is Lagrangian.
  
 	Note that
 \begin{equation*}
 d\theta([X,Y],Z)=[X,Y]\theta(Z)-Z\theta([X,Y])-\theta([[X,Y],Z]).
 \end{equation*}
 
 Let $(U, p^1,\dots, p^m)$ be a  coordinate system chart adapted to the foliation $\mathcal{F}$  with $(T^*U, p^1,\dots,p^{m}, \xi_1,\dots,\xi_{m})$ as its associated bundle coordinate chart. Observe that 
 \begin{equation}\label{Bieq8}
 \Gamma(N^*\mathcal{F})|_{T^*U}=\left<\frac{\partial}{\partial
 	p^1},\dots,\frac{\partial}{\partial   p^k},\frac{\partial}{\partial
 	\xi_{k+1}},\dots,\frac{\partial}{\partial
 	\xi_m}\right>
 \end{equation}
 
 Let us write
 \begin{equation*}
 \begin{cases}
 (y^i)_{i=1,\dots,m}=((p^i)_{i=1,\dots,m},(\xi_{i})_{i=k+1,\dots,m}),\\
 X=X^i\frac{\partial}{\partial y^i},Y=Y^j\frac{\partial}{\partial
 	y^j}\;\mbox{ and }\;Z=Z^k\frac{\partial}{\partial
 	y^k}.\end{cases}\end{equation*} Then \begin{equation*}
 \begin{cases}
 [X,Y]=\mu^j\frac{\partial}{\partial y^j},\\
 [[X,Y],Z]=\lambda^j\frac{\partial}{\partial
 	y^j},\end{cases}\end{equation*} where
 \begin{equation*}
 \begin{cases}
 \mu^j=X^i\frac{\partial Y^j}{\partial y^i}-Y^i\frac{\partial
 	X^j}{\partial y^i},\\
 \lambda^j=\mu^i\frac{\partial Z^j}{\partial y^i}-Z^i\frac{\partial
 	\mu^j}{\partial y^i}.\end{cases}\end{equation*} Thus,
 \begin{align}
 [X,Y]\theta(Z)&=\mu^i\frac{\partial}{\partial
 	y^i}(Z^k\xi_k),\tag{$e_1$}\label{Bieq5}\\
 \theta([[X,Y],Z])&=\lambda^j\xi_j, \tag{$e_2$}\label{Bieq6}
 \\Z\theta([X,Y])&=Z^k\frac{\partial}{\partial y^k}(\mu^i\xi_i). \tag{$e_3$}\label{Bieq7}
 \end{align}
 Therefore
 \begin{equation*}
 d\theta([X,Y],Z)=\mbox{(\ref{Bieq5})}-\mbox{(\ref{Bieq6})}-\mbox{(\ref{Bieq7})}=0.
 \end{equation*}
	
So $N^*\mathcal{F}$ is a  Lagrangian foliation on $(T^*M, d\theta)$.

Now, suppose that $\mathcal{F}$ is a Lagrangian foliation on $(M, \omega)$. Observe that for every  $X\in\Gamma(N^*\mathcal{F})$, $^*\hspace{-0.1cm}\pi_*X\in\Gamma(\mathcal{F})$. Thus for all $X, Y\in\Gamma(N^*\mathcal{F})$, 
\begin{equation*}
\tilde{\omega}(X,Y)=\omega(^*\hspace{-0.1cm}\pi_*X,^*\hspace{-0.1cm}\pi_*Y )+d\theta(X,Y)=0
\end{equation*}
where we use the fact that $\mathcal{F}$ is Lagrangian and the definition of $d\theta$.
So $N^*\mathcal{F}$ is a Lagrangian foliation on $(T^*M,\tilde{\omega} )$. This ends the proof of Lemma~\ref{Bilem2}.
\end{proof}
Now we are ready to prove Theorem~\ref{Bitheo1}. 	Let $(M, \omega,\mathcal{F}_1,\mathcal{F}_2)$ be a bi-Lagrangian manifold.
\begin{enumerate}
\item By Lemma~\ref{Bilem2}, $(N^*\mathcal{F}_1,N^*\mathcal{F}_2)$ is a pair of Lagrangian foliation on $(T^*M, d\theta)$. 
From (\ref{Bieq8}), it follows that if $(U, p^1,\dots,p^{n},q^1,\dots,q^{n})$ is  a  coordinate chart adapted to the bi-Lagrangian structure $(\mathcal{F}_1,\mathcal{F}_2)$, with $(T^*U, p^1,\dots,p^{n},q^1,\dots,q^{n}, \xi_1,\dots,\xi_{2n})$ as its associated bundle coordinate chart,  then
\begin{equation}\label{Bieq9}
\begin{cases}
\Gamma(N^*\mathcal{F}_1)|_{T^*U}=\left<\frac{\partial}{\partial
	p^1},\dots,\frac{\partial}{\partial   p^n},\frac{\partial}{\partial
	\xi_{n+1}},\dots,\frac{\partial}{\partial \xi_{2n}}\right>,\vspace{0.25cm}\\
\Gamma(N^*\mathcal{F}_2)|_{T^*U}=\left<\frac{\partial}{\partial
	q^{1}},\dots,\frac{\partial}{\partial q^{n}},\frac{\partial}{\partial
	\xi_1},\dots,\frac{\partial}{\partial
	\xi_n}  \right>\end{cases}
\end{equation}
and 
 \begin{equation}\label{Bieq10}
 d\theta|_{T^*U}=\sum_{i=1}^{n}(d\xi_i\wedge dp^i+d\xi_{n+i}\wedge dq^i).
 \end{equation}
 
By (\ref{Bieq9}), we get that  $N^*\mathcal{F}_1$  and $N^*\mathcal{F}_2)$ are transverse. so $(N^*\mathcal{F}_1,N^*\mathcal{F}_2)$ is a bi-Lagrangian structure on $(T^*M, d\theta)$. By combining equalities in (\ref{Bieq9}), and  equality (\ref{Bieq10}), it follows from Theorem~\ref{c10} that $(N^*\mathcal{F}_1,N^*\mathcal{F}_2)$ is affine. 
\item The point 2 of Theorem~\ref{Bitheo1} follows directly by combining Lemma~\ref{Bilem2} and equalities in (\ref{Bieq9}).
\item  By (\ref{eqlift3}), we have 
\begin{equation*}
\omega^c(X^c,Y^c)=(\omega(X,Y))^c\; \mbox{ for all } X,Y\in\mathfrak{X}(M).
\end{equation*}
Thus, since the structure $(\mathcal{F}_1,\mathcal{F}_2)$ is bi-Lagrangian on $(M,\omega)$, then  $(\mathcal{F}_1^c ,\mathcal{F}_2^c)$ is a pair of transversal Lagrangian foliation on $(TM, \omega^c)$. 

Let $\nabla$ be the Hess connection of $(\omega,\mathcal{F}_1, \mathcal{F}_2)$. We are going to show that  $\nabla^c$ is that of  $(\omega^c,\mathcal{F}_1^c, \mathcal{F}_2^c)$. 

By combining the fact that  $\nabla$ is the Hess connection of $(\omega,\mathcal{F}_1, \mathcal{F}_2)$, (\ref{eqlift3}), (\ref{lct}) and (\ref{lift of connection}) we have
\begin{equation}\label{eqHc0}
T^c(X^c,Y^c)=T^c(X^c,Y^c)=0
\end{equation}
\begin{equation}\label{eqHc1}
\nabla^c\omega^c=(\nabla\omega)^c=0
\end{equation}
\begin{equation}\label{eqHc2}
\nabla_{X^c}Y^c=(\nabla_{X}Y)^c\in\Gamma(\mathcal{F}^c_i)\; X,Y\in \Gamma(\mathcal{F}i)\; i=1,2.
\end{equation}
By combining (\ref{eqHc0}), (\ref{eqHc1}) and (\ref{eqHc2}), it follows that $\nabla^c$ is the Hess connection of $(\omega^c,\mathcal{F}_1^c, \mathcal{F}_2^c)$. This ends the proof of Theorem~\ref{Bitheo1}.  
\end{enumerate}
\end{proof}

\begin{proof}[Theorem~\ref{Bitheo2}]
\begin{enumerate}
\item Let $X_1$ $X_2$ be two Cherry vector field such that the following holds: 
\begin{enumerate}
\item[-] the vector field $X_1$ generates $f_1\in\mathscr{L}$ with $U_1=(a_1,b_1)$ as its flat piece, and $(\ell_1,\ell_1)$ as its critical exponents,
\item[-] the vector field $X_2$ generates $f_1\in\mathscr{L}$ with $U_2=(a_2,b_2)$ as its flat piece, and $(\ell_2,\ell_2)$ as its critical exponents, 
\item[-] $f_1(U_1)=f_2(U_2)$,
\item[-] the intervals $(a_1,b_1)$ and $(a_2,b_2)$ are such that $a_1<a_2\leq b_1<b_2$.
\end{enumerate}
The map 
\begin{equation*}
f(x)=\begin{cases}\begin{array}{lll}
f_1(x) & \mbox{if } & x\in [0,a_1]\\
f_2(x) & \mbox{if } & x\in [a_2,1]
\end{array}
\end{cases}
\end{equation*}
belongs to $\mathscr{L}$ with flat piece $U=(a_1,b_2)$ and critical exponents  $(\ell_1,\ell_2)$.
\item Let $X\in \mathbb{T}^2$  with $\Phi_X:\mapsto\Phi_X^t$ as its flow on an open $U$. Let $\varphi$ be a diffeomorphism. 
observe that 
\begin{equation*}
\left.\dfrac{d}{dt}\right | _{t=0}(\varphi\circ\Phi^t_X\circ\varphi^{-1}(y))=(\varphi_*X)_y\; \mbox{ for every } y\in U.
\end{equation*} 
This means $\varphi\circ\Phi^t_X\circ\varphi^{-1}$ is the flow of $\varphi_*X$.
This ends the proof of Theorem~\ref{Bitheo2}.
\end{enumerate}
\end{proof}

%


\begin{thebibliography}{99}
\bibitem{2} {M. N. Boyom,} vari\'{e}t\'{e}s symplectiques affine, Manuscripta Math., 1992, 1-33.
\bibitem{11}{M. N. Boyom,} M\'{e}triques K\"{a}hl\'{e}riennes affinement plates de certaines vari\'{e}t\'{e}s symplectiques, Proc. London. Math. Soc., 2, 1993, 338-380.
\bibitem{3}{M. N. Boyom,} Structures localement plates de
certaines vari\'{e}t\'{e}s symplectiques, Math. Scand., 1995,  61-84
\bibitem{GB1}{M. N. Boyom,} The Cohomology of Koszul-Vinberg Algebras, Pac. J. of Math., 1, 2006, 119-153.
\bibitem{TC}{T. Cherry,}  Analytic Quasi-Periodic Curves of Discontinuous Type on a Torus. Proc. Londres. Math. Soc., 1, 
1938, 175?215.
 \bibitem{dasilva}{ A. C. da Silva,} Lectures on Symplectic Geometry, Springer-Verlag, New York,  2006.
\bibitem{FR1}{F. Etayo and R. Santamaria,} The canonical connection of a bi-Lagrangian manifold, J. Phys. A. Math. Gen., 5, 2001, 981-987.
\bibitem{FR2}{F. Etayo and R. Santamaria,} Bi-Lagrangian manifolds and associated geometric structures.
Proceed. X Fall Workshop on Geometry and Physics, Miraflores de la Sierra (Madrid), 4, 2003, 117-126.
\bibitem{1}{F. Etayo, R. Santamaria and U. 	R. Tr\'{\i}as,}  The geometry of a bi-Lagrangian manifold, Differ.  Geom.  Appli., 24 2006, 33-59.
\bibitem{GJST}  Graczyk, J.,   Jonker,  L.\,B.,   \'{S}wi\k{a}tek, G.,   Tangerman, F.\,M. and  Veerman, J.\, J.\, P.,
Differentiable Circle Maps with a Flat Interval, Commun. Math. Phys.,  3, 1995,  599-622. 
\bibitem{GJ}  Graczyk, J., Dynamics of circle maps with flat spots, Fund. Math., 3, 2010,  267-290.
\bibitem{7}{H. Hess,} Connections on symplectic manifolds and geometric
quantization. Lecture notes in Mathematics, 1980, 153-166.
\bibitem{SI}{S. Ishihara,}­ Lifting tensor fields and connections to tangent bundles, Bull. Korean Math. Soc. Flight., 10, 1973, 33-35.
\bibitem{law}{B. Lawson,} Foliations, Bull. Am. Math. Soc., 1974, 369-418.
\bibitem{lee}J. M. Lee, Introduction to Smooth Manifolds, Springer-Verlag, New York,  2013.
\bibitem{paul}{P. Libermann and C. M. Marle,} Symplectic Geometry and Analytical Mechanics, D. Reidel Publishing Company, Dordrecht, Holland, 1987.
\bibitem{FE}{ B. Loustau and A. Sanders.} Bi-Lagrangian structures and Teichm\"{u}ller theory, preprint,  2017, arXiv:1708.09145.
\bibitem{MSM}{Martens, M.,  Strien, S., Melo,  W.,  and  Mendes, P.,} On Cherry flows, Erg. Th. and Dyn. Sys., 3,  1990, 531-554.  
\bibitem{PM}{P. Mendes,} A metric property of Cherry vector fields on the torus, J. Differential Equations, 2, 1991, 305-316.                                                                                                                                                                                                                                                                                                                                                                                                                                                                                                                                                                                                                                                                                                                                                                                                                                                                                                                                                                                                                                                                                                                                                                                                                                                                                                                                                                                                                                                                                                                                                                                                                                                                                                                                                                                                                                                                                                                              
\bibitem{EF}{E. Miranda and F. Presas,} Geometric Quantization of Real Polarization via Sheaves. J. Symplectic Geom., 2, 2015, 421-462.
\bibitem{NZ}{ I. Nikolaev and E. Zhuzhoma,}  Flows on 2-dimensional Manifolds, Springer-Verlag,  Berlin Heidelberg, 1999.
\bibitem{LP1}{ L. Palmisano,}   A Phase Transition for circle Maps and Cherry Flows,  Commun. Math. Phys., 2013, vol.\,321, no.\,1, pp.\,135--155.
\bibitem{15}{L. Palmisano  and  B. N. Tangue,} A Phase Transition for Circle Maps with a Flat Spot and Different Critical Exponents, DCDS,  11, 2021, pp 5037-5055.
\bibitem{SS}{S. Sasaki,} on the differential geometry of tangent bundles of Riemannian manifolds, Tohoku Math. J., 3, 1958, 338-354.
\bibitem{TNB1}{B. N. Tangue,} Cherry maps with different critical exponents: bifurcation of geometry, Rus. J. Nonlin. Dyn.,  4, 2020, 651-672.
\bibitem{TNB2}{B. N. Tangue,} Infinite Lifting of an  Action of Symplectomorphism Group on the set of  bi-Lagrangian structures, J. Geom. mech, doi:10.3934/jgm.2022006.           
\bibitem{TNB3}{B. N. Tangue,} Rigidity of Fibonacci Circle Maps with a Flat Piece and  Different Critical Exponents,  arXiv:2103.02347v2.	
\bibitem{vai}{I. Vaisman,}   Hessian Geometry on Lagrange Spaces. Int. J. Math. Math. Sci., 2013, 1-10.
\bibitem{Wei}{A. Weinstein,} Symplectic manifolds and their Lagrangian submanifolds, Advances in Math.,3, 1971, 329-346.
\bibitem{YK}{K. Yano and S. Kobayashi,} Extensions of tensor fields and connections to tangent bundles I, J. Math. Soc. Japan,  2, 1966, 194-210.









  





\end{thebibliography}
\end{document}